\theoremstyle{plain}
\newtheorem{theorem}{Theorem}[section]
\newtheorem{proposition}[theorem]{Proposition}
\newtheorem{corollary}[theorem]{Corollary}
\newtheorem{remark}[theorem]{Remark}
\newtheorem{remark-question}[section]{Remark-Question}
\newcommand\frg{{\mathfrak g}}
\newcommand\db{{\bar{\partial}}}
\begin{document}

\title[]{On non-K\"ahler compact complex manifolds with balanced 
and astheno-K\"ahler metrics}

\keywords{Complex manifold; astheno-K\"ahler metric; balanced metric.}
\subjclass[2010]{53C55; 32J27, 53C15}

\author{Adela Latorre \hskip.5cm }
\address[]
{Departamento de Matem\'aticas\,-\,I.U.M.A.\\
Universidad de Zaragoza\\
Campus Plaza San Francisco\\
50009 Zaragoza, Spain}
\email{adela@unizar.es, ugarte@unizar.es}

\author{\ Luis Ugarte}


\maketitle

\begin{abstract}
In this note we construct, for every $n \geq 4$, a non-K\"ahler compact complex manifold $X$
of complex dimension $n$ admitting a balanced metric and an astheno-K\"ahler metric which is in addition
$k$-th Gauduchon for any $1\leq k\leq n-1$.
\end{abstract}

\section{Introduction}\label{introduction}

\noindent
Let $X$ be a compact complex manifold of complex dimension $n$, and let $F$ be a Hermitian metric on $X$.
It is well-known that the metric $F$ is called \emph{balanced} if the Lee form vanishes,
equivalently the form $F^{n-1}$ is closed.
If $\partial\db F^{n-2}=0$, then the Hermitian
metric $F$ is said to be \emph{astheno-K\"ahler}. Balanced metrics are studied by Michelsohn in \cite{Mi},
and the class of astheno-K\"ahler metrics is considered by Jost and Yau in \cite{JY}
to extend Siu's rigidity theorem to non-K\"ahler manifolds.
This note is motivated by a question in the paper \cite{STW} by Sz\'ekelyhidi, Tosatti and Weinkove
about the existence of examples of non-K\"ahler compact complex manifolds
admitting both balanced and astheno-K\"ahler metrics.
Recently, two examples, in dimensions 4 and 11, are constructed by Fino, Grantcharov and Vezzoni in \cite{FGV}.
Our goal is to present examples
in any complex dimension $n \geq 4$.
Moreover, we show that our astheno-K\"ahler metrics satisfy
the stronger condition of being $k$-th Gauduchon for every $1\leq k\leq n-1$.

When the Lee form is co-closed, equivalently $F^{n-1}$ is
$\partial \overline\partial$-closed, the Hermitian metric $F$ is
called \emph{standard} or \emph{Gauduchon}. By \cite{Gau}
there is a Gauduchon metric in the conformal class of every Hermitian metric on $X$.
Fu, Wang and Wu introduce and study in \cite{FWW} the following generalization of Gauduchon metrics.
Let $k$  be an integer such that $1 \leq k \leq n-1$, a Hermitian metric $F$ on $X$ is called {\em $k$-th Gauduchon} if
$\partial \overline \partial F^k  \wedge F^{n -k-1} =0$.

By definition, $(n-1)$-th Gauduchon metrics are the usual Gauduchon metrics.
Astheno-K\"ahler metrics are particular examples of $(n-2)$-th Gauduchon metrics, and any \emph{pluriclosed} (\emph{SKT}) metric,
i.e. a metric satisfying  $\partial\db F=0$,
is in particular $1$-st Gauduchon.

In \cite{FWW} a unique constant $\gamma_k (F)$ is associated to any Hermitian metric $F$ on $X$. This
constant is invariant by biholomorphisms and depends smoothly on $F$. Moreover, it is proved that
$\gamma_k (F) =0$ if and only if there exists a $k$-th Gauduchon metric in the conformal class of $F$.

On a compact complex surface any Hermitian metric is automatically astheno-K\"ahler, and the balanced condition is the same as
the K\"ahler one. In
complex dimension $n=3$ the notion of astheno-K\"ahler metric
coincides with that of SKT metric.

SKT or astheno-K\"ahler metrics on a compact complex manifold $X$ of complex dimension~$n \geq 3$ cannot be balanced
unless they are
K\"ahler (see \cite{AI,MT}). If the Lee form is exact, then the Hermitian structure is conformally balanced.
By \cite{FT,IP01} a conformally balanced SKT or astheno-K\"ahler metric
whose Bismut connection has (restricted)
holonomy contained in $SU(n)$ is necessarily K\"ahler.
Similar results for $1$-st Gauduchon metrics are proved in \cite{FU}.
Ivanov and Papadopoulos \cite{IP} have extended these results to any generalized $k$-th Gauduchon metric, for $k \not=n-1$.

A recent conjecture in \cite{FV1}
asserts that if $X$ has an SKT metric and another metric which is balanced, then $X$ is K\"ahler.
By a result of Chiose \cite{Chiose}
a manifold in the Fujiki class~$\mathcal{C}$ has no SKT metrics unless it is K\"ahler.
In \cite{FV2} the conjecture is studied on the class of \emph{complex nilmanifolds} $X=(\Gamma \backslash G, J)$,
i.e. on compact quotients
of simply-connected nilpotent Lie groups $G$ by uniform discrete subgroups $\Gamma$
endowed with an invariant complex structure $J$.
In this note we construct, for every $n \geq 4$, a non-SKT complex nilmanifold $X$
of complex dimension $n$ admitting a balanced metric and an astheno-K\"ahler metric which additionally satisfies
the stronger condition of being $k$-th Gauduchon for every $1\leq k\leq n-1$.

\section{Generalized Gauduchon metrics on complex nilmanifolds}\label{k-Gauduchon-sec}

\noindent We first prove the following general result.

\begin{proposition}\label{k-Gauduchon}
Let $X$ be a compact complex manifold of complex dimension $n\geq 3$,
and~$F$ any Hermitian metric on $X$.
For any integer $k$ such that $1 \leq k \leq n-1$, we have
\begin{equation}\label{formula1}
\int_X \partial\bar{\partial}F^k\wedge F^{n-k-1}
= \frac{k(n-k-1)}{n-2}\,\int_X \partial\bar{\partial}F\wedge F^{n-2}.
\end{equation}
\end{proposition}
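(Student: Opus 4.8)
The plan is to reduce the left-hand side to the single integral appearing on the right by first expanding $\partial\db F^k$ with the Leibniz rule and then killing the resulting quadratic term $\partial F\wedge\db F$ through an integration by parts.

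First I would record the pointwise identity coming from Leibniz. Writing $\db F^k=k\,F^{k-1}\wedge\db F$ and applying $\partial$, and using that $F^{k-1}$ has even degree so that no signs intervene, one obtains
\begin{equation*}
\partial\db F^k = k(k-1)\,F^{k-2}\wedge\partial F\wedge\db F + k\,F^{k-1}\wedge\partial\db F.
\end{equation*}
Wedging with $F^{n-k-1}$ collapses the powers of $F$ and yields the top-degree form
\begin{equation*}
\partial\db F^k\wedge F^{n-k-1} = k(k-1)\,F^{n-3}\wedge\partial F\wedge\db F + k\,F^{n-2}\wedge\partial\db F,
\end{equation*}
valid for every $1\le k\le n-1$; note that the two integrands are now independent of $k$ apart from the scalar coefficients $k(k-1)$ and $k$.

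The key step is to relate $\int_X F^{n-3}\wedge\partial F\wedge\db F$ to $\int_X\partial\db F\wedge F^{n-2}$, which I would achieve by integrating the exact form $d\bigl(\db F\wedge F^{n-2}\bigr)$ over the closed manifold $X$. Expanding with Leibniz, using $d\db F=\partial\db F$ and $dF^{n-2}=(n-2)\,F^{n-3}\wedge(\partial F+\db F)$, one sees that the contribution $F^{n-3}\wedge\db F\wedge\db F$ has anti-holomorphic bidegree $n+1>n$ and hence vanishes identically; after sign bookkeeping for the odd-degree factors $\partial F,\db F$ this leaves
\begin{equation*}
d\bigl(\db F\wedge F^{n-2}\bigr) = \partial\db F\wedge F^{n-2} + (n-2)\,F^{n-3}\wedge\partial F\wedge\db F.
\end{equation*}
Since $X$ is compact without boundary, Stokes' theorem forces the integral of the left side to vanish, giving
\begin{equation*}
\int_X F^{n-3}\wedge\partial F\wedge\db F = -\frac{1}{n-2}\int_X\partial\db F\wedge F^{n-2}.
\end{equation*}

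Finally I would substitute this into the integral of the $k$-dependent identity, using that the even-degree forms $\partial\db F$ and $F^{n-2}$ commute. The overall coefficient becomes $-\tfrac{k(k-1)}{n-2}+k=\tfrac{k(n-k-1)}{n-2}$, which is exactly \eqref{formula1}. The only genuinely delicate point is the integration by parts: one must verify that the $\db F\wedge\db F$ contribution drops for bidegree reasons and must correctly track the signs produced when wedging the odd-degree forms $\partial F$ and $\db F$, since an error there would spoil the crucial factor $n-2$ in the denominator.
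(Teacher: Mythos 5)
Your proof is correct and follows essentially the same route as the paper: the identical Leibniz expansion of $\partial\bar{\partial}F^k\wedge F^{n-k-1}$ into $k\,\partial\bar{\partial}F\wedge F^{n-2}+k(k-1)\,\partial F\wedge\bar{\partial}F\wedge F^{n-3}$, followed by Stokes' theorem to eliminate the cross term $\partial F\wedge\bar{\partial}F\wedge F^{n-3}$. The only cosmetic difference is that you integrate by parts against the $k$-independent exact form $d\bigl(\bar{\partial}F\wedge F^{n-2}\bigr)$ (the $k=1$ instance of the paper's $d\bigl(\bar{\partial}F^k\wedge F^{n-k-1}\bigr)$), which lets you handle all $1\leq k\leq n-1$ uniformly instead of first disposing of the trivial cases $k=1$ and $k=n-1$.
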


\begin{proof}
The equality \eqref{formula1} is trivial for $k=1$ and for $k=n-1$. Let us then suppose that $2 \leq k \leq n-2$.
By induction
one has $\partial F^k=k\,\partial F\wedge F^{k-1}$ and
$\bar{\partial} F^k=k\,\bar{\partial} F\wedge F^{k-1}$. Therefore,
\begin{equation}\label{triangulo}
\partial\bar{\partial} F^k\wedge F^{n-k-1}=
k\,\partial\bar{\partial} F\wedge F^{n-2}+k(k-1)\,\partial F\wedge\bar{\partial} F\wedge F^{n-3}.
\end{equation}

On the other hand, 
\begin{eqnarray*}
\partial\bar{\partial} F^k\wedge F^{n-k-1}
\!&\!=\!&\!
d\left(\bar{\partial} F^k\wedge F^{n-k-1}\right)+\bar{\partial} F^k\wedge\partial F^{n-k-1} \\
\!&\!=\!&\! d\left(\bar{\partial} F^k\wedge F^{n-k-1}\right)-k\,(n-k-1)\partial F\wedge\bar{\partial} F\wedge F^{n-3},\nonumber
\end{eqnarray*}
so we get
$$
\partial F\wedge\bar{\partial} F\wedge F^{n-3}=\frac{-1}{k(n-k-1)}
\left[\partial\bar{\partial} F^k\wedge F^{n-k-1} - d\left(\bar{\partial} F^k\wedge F^{n-k-1}\right)\right].
$$
Now, if we substitute this expression in \eqref{triangulo} we have
\begin{eqnarray*}
\partial\bar{\partial} F^k\wedge F^{n-k-1}=k\,\partial\bar{\partial} F\wedge F^{n-2}
- \frac{k-1}{n-k-1}\,\partial\bar{\partial} F^k\wedge F^{n-k-1}
+ \frac{k-1}{n-k-1}\,d\left(\bar{\partial} F^k\wedge F^{n-k-1}\right),
\end{eqnarray*}
which leads to
$$(n-2)\,\partial\bar{\partial} F^k\wedge F^{n-k-1} =
  k(n-k-1)\,\partial\bar{\partial} F\wedge F^{n-2} + (k-1)\, d\left(\bar{\partial} F^k\wedge F^{n-k-1}\right).$$
By Stokes theorem we arrive at \eqref{formula1}.
\end{proof}

Next we apply the previous proposition to homogeneous compact complex manifolds $X$, of complex dimension~$n$,
endowed with an \emph{invariant} Hermitian metric $F$.
We recall that in \cite[Lemma 4.7]{FU} the following duality result
is proved: for each $k=1,\ldots,\left[\frac{n}{2}\right]-1$,
the Hermitian metric $F$ is $k$-th Gauduchon if and only if it is $(n-k-1)$-th Gauduchon.
As a consequence of Proposition~\ref{k-Gauduchon}, the relation among these metrics turns out to be stronger:

\begin{proposition}\label{k-Gauduchon-nil}
Let $F$ be an invariant Hermitian
metric on a homogeneous compact complex manifold $X$ of complex dimension $n\geq 3$,
and let $k$ be an integer such that $1 \leq k \leq n-2$.
Then,
\begin{enumerate}
\item[(i)] $F$ is always Gauduchon, and
\item[(ii)] if $F$ is $k$-th Gauduchon for some $k$, then it is $k$-th Gauduchon for any other $k$.
\end{enumerate}
\end{proposition}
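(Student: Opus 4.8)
The plan is to exploit the rigidity of invariant top-degree forms on a homogeneous space. Since $X$ is homogeneous and the metric $F$ is invariant, the operators $\partial$ and $\bar\partial$ commute with the (biholomorphic) group action and therefore preserve invariance; hence every form of the shape $\partial\bar\partial F^k\wedge F^{n-k-1}$ is an invariant form of bidegree $(n,n)$, i.e.\ of top degree $2n$. The space of invariant $2n$-forms is at most one-dimensional: since $F^n$ is a nowhere-vanishing invariant top form, any invariant top form can be written as $f\,F^n$, and invariance forces the coefficient $f$ to be invariant, hence constant by transitivity of the action. Consequently, for each $k$ there is a constant $c_k$ with
\begin{equation*}
\partial\bar\partial F^k\wedge F^{n-k-1} = c_k\, F^n,
\end{equation*}
and $F$ is $k$-th Gauduchon precisely when $c_k=0$. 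The whole argument then reduces to controlling these scalars.

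For part (i) I would take $k=n-1$, so that $\partial\bar\partial F^{n-1}=c\,F^n$ for a constant $c$. The point is that this form is globally exact, $\partial\bar\partial F^{n-1}=d(\bar\partial F^{n-1})$, so Stokes' theorem gives $c\int_X F^n = \int_X \partial\bar\partial F^{n-1}=0$. Since $\int_X F^n>0$, we conclude $c=0$ and hence $\partial\bar\partial F^{n-1}=0$; that is, $F$ is automatically Gauduchon.

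For part (ii) I would integrate the identity defining $c_k$ and feed it into Proposition~\ref{k-Gauduchon}. On the one hand $\int_X \partial\bar\partial F^k\wedge F^{n-k-1} = c_k\int_X F^n$, and the case $k=1$ gives $\int_X \partial\bar\partial F\wedge F^{n-2}=c_1\int_X F^n$. Substituting both into \eqref{formula1} yields
\begin{equation*}
c_k\int_X F^n = \frac{k(n-k-1)}{n-2}\,c_1\int_X F^n,
\end{equation*}
so $c_k = \frac{k(n-k-1)}{n-2}\,c_1$. Since $1\le k\le n-2$ makes the factor $\frac{k(n-k-1)}{n-2}$ strictly positive, the vanishing of $c_k$ is equivalent to the vanishing of $c_1$ for every admissible $k$. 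Thus being $k$-th Gauduchon for one value of $k$ is equivalent to being $k$-th Gauduchon for all of them.

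The main thing to get right — and the only place where homogeneity is genuinely used — is the first paragraph: the passage from the integral identity of Proposition~\ref{k-Gauduchon}, which a priori only constrains integrals, to the \emph{pointwise} vanishing of the forms. For a general Hermitian metric the quantities $c_k$ would be non-constant functions and this implication would fail, but invariance collapses each top form to a single scalar, after which Proposition~\ref{k-Gauduchon} supplies the rest.
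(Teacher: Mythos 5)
Your proposal is correct and follows essentially the same route as the paper: both reduce the invariant top form $\partial\bar\partial F^k\wedge F^{n-k-1}$ to a constant multiple $c_k\,F^n$ by homogeneity, prove (i) by observing that $c_{n-1}\neq 0$ would make $F^n$ exact (contradicting $\int_X F^n>0$ via Stokes), and prove (ii) by integrating and invoking Proposition~\ref{k-Gauduchon} to obtain $c_k=\frac{k(n-k-1)}{n-2}\,c_1$ with a nonvanishing proportionality factor. Your first paragraph is in fact slightly more explicit than the paper about why invariance forces the coefficient to be constant, but the substance is identical.
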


\begin{proof}
For any invariant Hermitian metric $F$ and any $1 \leq k \leq n-2$,
the real $(n,n)$-form $\frac{i}{2}\,\partial\bar{\partial}F^k\wedge F^{n-k-1}$ is proportional to the volume form $F^n$,
hence
\begin{equation}\label{formula-constante}
\frac{i}{2}\,\partial\bar{\partial}F^k\wedge F^{n-k-1}= C_{F,k}\, F^n,
\end{equation}
for some constant $C_{F,k} \in \mathbb{R}$ (notice that $C_{F,k}$ is a multiple of the constant $\gamma_k(F)$ in \cite{FWW}).

If $k=n-1$ then $C_{F,n-1}=0$, i.e. $F$ is Gauduchon, because otherwise the form $F^n$ would be exact.
Now, let $k$ be such that $1 \leq k \leq n-2$. From~\eqref{formula1} and~\eqref{formula-constante} we get
$$
C_{F,k} \int_X F^n
= \frac{i}{2} \int_X \partial\bar{\partial}F^k\wedge F^{n-k-1}
=\frac{k(n-k-1)}{n-2}\,\frac{i}{2} \int_X \partial\bar{\partial}F\wedge F^{n-2}
= \frac{k(n-k-1)}{n-2}\,C_{F,1} \int_X F^n,
$$
that is,
$
\left( C_{F,k} - \frac{k(n-k-1)}{n-2}\,C_{F,1} \right) \, \int_X F^n =0.
$
Therefore,
\begin{equation}\label{relacion}
C_{F,k} = \frac{k(n-k-1)}{n-2}\,C_{F,1}\,,
\end{equation}
for any $k$ such that $1 \leq k \leq n-2$. Hence, if $F$ is $k$-th Gauduchon for some $k$, then $C_{F,k}=0$ and by
\eqref{relacion} we get $C_{F,1}=0$. Using again \eqref{relacion} we conclude that $C_{F,k}=0$ for any other $k$, i.e.
$F$ is $k$-th Gauduchon for any $1 \leq k \leq n-2$.
\end{proof}

\begin{corollary}\label{AK is k-Gauduchon}
Let $X$ be a homogeneous compact complex manifold of complex dimension $n\geq 3$ and let $F$ be an invariant Hermitian metric on $X$.
If $F$ is SKT or astheno-K\"ahler, then $F$ is $k$-th Gauduchon for any $1 \leq k \leq n-1$.
\end{corollary}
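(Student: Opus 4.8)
The plan is to derive this corollary as an almost immediate consequence of Proposition~\ref{k-Gauduchon-nil}. The key observation is that both the SKT and astheno-K\"ahler conditions can be rephrased in terms of $F$ being $k$-th Gauduchon for a specific value of $k$, after which part~(ii) of Proposition~\ref{k-Gauduchon-nil} does all the work.

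First I would recall the translation between the hypotheses and the $k$-th Gauduchon property. By definition, $F$ is $k$-th Gauduchon precisely when $\partial\bar{\partial}F^k\wedge F^{n-k-1}=0$. If $F$ is SKT, then $\partial\bar\partial F=0$, so trivially $\partial\bar\partial F^1\wedge F^{n-2}=0$; that is, $F$ is $1$-st Gauduchon. If instead $F$ is astheno-K\"ahler, then $\partial\bar\partial F^{n-2}=0$, and since $F^{n-(n-2)-1}=F^1$, multiplying by $F$ shows that $F$ is $(n-2)$-th Gauduchon. In either case, $F$ is $k$-th Gauduchon for at least one value of $k$ in the range $1\leq k\leq n-2$.

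Now I would invoke Proposition~\ref{k-Gauduchon-nil}. Since $X$ is homogeneous and $F$ is invariant, that proposition applies. Part~(ii) tells us that if $F$ is $k$-th Gauduchon for some $k$ with $1\leq k\leq n-2$, then it is $k$-th Gauduchon for \emph{every} $k$ in that range; and part~(i) guarantees that $F$ is automatically Gauduchon, i.e. $(n-1)$-th Gauduchon. Combining these, $F$ is $k$-th Gauduchon for every $1\leq k\leq n-1$, which is exactly the claim.

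The proof really contains no obstacle: all the analytic content, namely the proportionality relation \eqref{relacion} among the constants $C_{F,k}$ and the consequent equivalence of the various Gauduchon conditions, has already been established in Proposition~\ref{k-Gauduchon-nil}. The only point requiring a moment's care is the trivial bookkeeping that the astheno-K\"ahler condition $\partial\bar\partial F^{n-2}=0$ indeed matches the $(n-2)$-th Gauduchon condition (after wedging with the correct power of $F$), and that the case $n=3$ is consistent, where SKT and astheno-K\"ahler coincide and the range $1\leq k\leq n-2$ collapses to $k=1$.
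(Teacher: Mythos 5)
Your proof is correct and is exactly the argument the paper intends (the corollary is stated without proof precisely because it follows immediately from Proposition~\ref{k-Gauduchon-nil}): SKT gives $1$-st Gauduchon, astheno-K\"ahler gives $(n-2)$-th Gauduchon after wedging with $F$, part~(ii) propagates this to all $1\leq k\leq n-2$, and part~(i) supplies the case $k=n-1$. No gaps; the bookkeeping remarks about the $(n-2)$-th Gauduchon condition and the $n=3$ case are handled correctly.
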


\begin{theorem}\label{nonK-balanced-kG}
For each $n \geq 4$, there is a non-K\"ahler compact complex manifold $X$ of complex dimension $n$
admitting a balanced metric $\tilde{F}$
and an astheno-K\"ahler metric $F$
which is additionally $k$-th Gauduchon for any $1\leq k\leq n-1$.
\end{theorem}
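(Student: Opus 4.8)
The plan is to construct an explicit example rather than argue abstractly, since the theorem asserts existence for every $n\geq 4$. Given the setup, the natural candidate is a complex nilmanifold $X=(\Gamma\backslash G,J)$ of complex dimension $n$, because Corollary~\ref{AK is k-Gauduchon} already reduces the problem: on a homogeneous compact complex manifold it suffices to produce an invariant metric that is merely astheno-K\"ahler, and the $k$-th Gauduchon property for all $1\leq k\leq n-1$ comes for free. So the real targets are (a) an invariant balanced metric $\tilde F$, (b) an invariant astheno-K\"ahler metric $F$, (c) non-K\"ahlerness, all on one and the same complex nilmanifold.

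First I would fix a convenient invariant complex structure by prescribing a global coframe of $(1,0)$-forms $\{\omega^1,\dots,\omega^n\}$ together with their structure equations $d\omega^j=\sum c^j_{AB}\,\omega^A\wedge\omega^B$, choosing the complex structure constants $c^j_{AB}$ so that $J$ is integrable (the $(0,2)$-part of each $d\omega^j$ vanishes). A clean way to get dimension $n$ for every $n\geq 4$ is to take a low-dimensional non-K\"ahler nilpotent ``core'' carrying the nontrivial differentials — for instance a structure in which $\omega^1,\dots,\omega^{n-1}$ are closed and only $d\omega^n$ (or $d\omega^{n-1}$ and $d\omega^n$) is nonzero, built from a fixed small building block padded out by abelian directions. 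Rationality of the structure constants guarantees, by Mal'cev's criterion, the existence of a uniform lattice $\Gamma$, hence a compact $X$. Non-K\"ahlerness is then immediate: a nilmanifold with $J$ is K\"ahler only if it is a torus, so as long as $G$ is non-abelian (equivalently some $d\omega^j\neq 0$), $X$ admits no K\"ahler metric at all.

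Next I would write down two invariant metrics. For the balanced metric I take the diagonal Hermitian form $\tilde F=\frac{i}{2}\sum_{j}\lambda_j\,\omega^j\wedge\bar\omega^j$ with positive constants $\lambda_j$, compute $\tilde F^{n-1}$, and impose $d\tilde F^{n-1}=0$; because the differentials are concentrated in one or two directions, this reduces to a small algebraic constraint among the $c^j_{AB}$ and the $\lambda_j$ that I would arrange to hold by the choice of structure equations (the standard trick being to make the nonclosed direction $\omega^n$ pair correctly so that its contribution to $d\tilde F^{n-1}$ cancels). For the astheno-K\"ahler metric $F$ — possibly a different diagonal form, or the same one with rescaled coefficients — I compute $\partial\bar\partial F^{n-2}$ and impose its vanishing; again this is a finite computation in the exterior algebra of the $\omega^j$, and I would tune the coefficients so that $\partial\bar\partial F=0$ type cancellations occur. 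Here the crucial point from the earlier results is that astheno-K\"ahler and balanced cannot coincide (they force K\"ahler, by \cite{AI,MT}), so $F$ and $\tilde F$ must genuinely be \emph{different} metrics; the construction has to leave enough free parameters to satisfy the two incompatible conditions separately.

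The main obstacle I anticipate is the simultaneous realization: finding structure constants and coefficient choices such that one diagonal metric is balanced and \emph{another} is astheno-K\"ahler on the \emph{same} integrable $(X,J)$, while keeping everything rational for the lattice and explicit for every $n$. The balanced condition $d\tilde F^{n-1}=0$ and the astheno-K\"ahler condition $\partial\bar\partial F^{n-2}=0$ each involve wedge products whose combinatorics grow with $n$, so I would organize the computation by isolating the single nonclosed generator: if only $d\omega^n=\sum \alpha_{AB}\,\omega^A\wedge\bar\omega^B$ (plus its conjugate-type terms) is nonzero, then both $d\tilde F^{n-1}$ and $\partial\bar\partial F^{n-2}$ collapse to expressions linear in the $\alpha_{AB}$ wedged against a fixed high power of the remaining closed generators, and the $n$-dependence becomes a controllable binomial factor. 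Verifying integrability and then checking the two metric conditions by a direct coordinate computation in the fixed coframe is the decisive step, after which Corollary~\ref{AK is k-Gauduchon} upgrades astheno-K\"ahler to $k$-th Gauduchon for all $k$ automatically.
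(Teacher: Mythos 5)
Your outline follows the same strategy as the paper: a complex nilmanifold whose structure equations have $d\omega^1=\cdots=d\omega^{n-1}=0$ and a single non-closed generator $\omega^n$, diagonal invariant metrics, the reduction of the $k$-th Gauduchon claim to the astheno-K\"ahler one via Corollary~\ref{AK is k-Gauduchon}, and non-K\"ahlerness from the non-abelianness of $G$. But there is a genuine gap exactly where the content of the theorem lies: you never exhibit structure constants and metric coefficients for which the balanced equation and the astheno-K\"ahler equation are \emph{simultaneously solvable}; you yourself flag this as ``the main obstacle I anticipate'' and then leave it as a computation you ``would'' do. This is not a routine tuning step, because the two conditions interact through sign and positivity constraints. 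Concretely, with the paper's choice $d\omega^n=\sum_{j=1}^{n-1}a_j\,\omega^j\wedge\bar\omega^j$ (which fits inside your ansatz), balancedness of $\tilde F=\frac{i}{2}\sum_{j=1}^{n}\omega^j\wedge\bar\omega^j$ forces $a_1+\cdots+a_{n-1}=0$, so the $a_j$ cannot all have the same sign; on the other hand, for $F=\frac{i}{2}\sum_{j=1}^{n-1}b_j\,\omega^j\wedge\bar\omega^j+\frac{i}{2}\,\omega^n\wedge\bar\omega^n$ one computes
\begin{equation*}
(-2i)^{n-2}\,\partial\bar\partial F^{n-2}
=-(n-2)\,\Bigl(\textstyle\sum_{j=1}^{n-1}a_j\,\omega^j\wedge\bar\omega^j\Bigr)^{2}\wedge
\Bigl(\textstyle\sum_{j=1}^{n-1}b_j\,\omega^j\wedge\bar\omega^j\Bigr)^{n-3},
\end{equation*}
so the astheno-K\"ahler condition is a quadratic identity in the $a_j$ that must hold with all $b_j>0$. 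Nothing in your proposal shows this system is consistent; a priori it could have no solution. Indeed, your hope for ``$\partial\bar\partial F=0$ type cancellations'' is misguided: on these nilmanifolds the SKT equation $\partial\bar\partial F=0$ has \emph{no} solution at all (as the paper notes), so the vanishing must come from the weaker, genuinely $n$-dependent equation above, not from $\partial\bar\partial F$ itself.

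The paper closes this gap with one explicit choice. Writing $A_{n-2}=\sum_{j=1}^{n-2}a_j\,\omega^j\wedge\bar\omega^j$, take $a_1,\ldots,a_{n-2}>0$, $a_{n-1}=-(a_1+\cdots+a_{n-2})$, set $b_j=a_j$ for $j\leq n-2$, and $b_{n-1}=\frac{2}{n-3}(a_1+\cdots+a_{n-2})>0$. Since only $n-2$ generators occur in $A_{n-2}$, every $(n-1,n-1)$-form built from them vanishes, and the astheno-K\"ahler equation collapses to
\begin{equation*}
\bigl[(n-3)\,b_{n-1}-2\,(a_1+\cdots+a_{n-2})\bigr]\,(A_{n-2})^{n-2}\wedge\omega^{n-1}\wedge\bar\omega^{n-1}=0,
\end{equation*}
which the choice of $b_{n-1}$ kills while keeping $F$ positive definite (here $n\geq 4$ guarantees $n-3\geq 1$). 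Some verification of this kind --- an explicit solution, or at least an argument that the positivity constraints are compatible with the equation for every $n$ --- is the substance of the proof; without it your argument establishes only that the strategy is plausible, not that the claimed manifold and metrics exist.
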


\begin{proof}
We will construct such an $X$ using the class of complex nilmanifolds.
Let $(a_{1},\ldots,a_{n-1}) \in (\mathbb{R}\!\setminus\!\{0\})^{n-1}$, and
let $\{\omega^j\}_{j=1}^n$ be a basis of forms of type (1,0) satisfying
\begin{equation}\label{ecus}
d\omega^1=\cdots=d\omega^{n-1}=0,\quad
d\omega^{n}=\sum_{j=1}^{n-1} a_{j}\, \omega^{j\bar{j}}.
\end{equation}
(See Remark~\ref{AndradaBD} below for more details.)
We impose the ``canonical'' metric
$\tilde{F} = \frac{i}{2} (\omega^{1\bar{1}}+ \cdots+ \omega^{n\bar{n}})$
to be balanced, i.e. $d\tilde{F}^{n-1}=0$.
This condition is equivalent to
\begin{equation}\label{balanced-condition}
a_{1} + \cdots + a_{n-1}=0.
\end{equation}
Let us now consider a generic ``diagonal'' metric
\begin{equation}\label{diagonal-metric}
F= \frac{i}{2} (b_1\, \omega^{1\bar{1}}+ \cdots+ b_{n-1}\, \omega^{{n-1}\overline{n-1}})
+ \frac{i}{2} \, \omega^{n\bar{n}},
\end{equation}
where $b_1, \ldots, b_{n-1} \in \mathbb{R}^+$.

Let $r\leq n-1$. We denote
$A_{r}=a_{1}\, \omega^{1\bar{1}}+ \cdots +a_{r}\, \omega^{r\overline{r}}$
and $B_{r}=b_{1}\, \omega^{1\bar{1}}+ \cdots +b_{r}\, \omega^{r\overline{r}}$.
Hence, in~\eqref{ecus} and \eqref{diagonal-metric}
we can write $d\omega^{n}=A_{n-1}$ and $F=\frac{i}{2}\, B_{n-1} + \frac{i}{2}\, \omega^{n\bar{n}}$.

Let us calculate $\partial \db F^{n-2}$.
Using that the form $B_{n-1}$ is closed, we get
\begin{eqnarray*}
(-2i)^{n-2}\partial \db F^{n-2}
\!&\!=\!&\!
\partial \db \left(B_{n-1} + \omega^{n\bar{n}} \right)^{n-2}
=
\partial \db (B_{n-1})^{n-2} + (n-2)\, \partial \db \left( (B_{n-1})^{n-3} \wedge \omega^{n\bar{n}} \right) \\
\!&\!=\!&\!
(n-2)\, (B_{n-1})^{n-3} \wedge \partial \db(\omega^{n\bar{n}})
=-(n-2)\, (A_{n-1})^2 \wedge (B_{n-1})^{n-3},\nonumber
\end{eqnarray*}
where in the last equality we have used that
$\partial \db(\omega^{n\bar{n}})=\db \omega^n \wedge\partial \omega^{\bar{n}}=-A_{n-1} \wedge A_{n-1}$.

Therefore, $F$ is astheno-K\"ahler if and only if $(A_{n-1})^2 \wedge (B_{n-1})^{n-3}=0$.

We now use the balanced condition~\eqref{balanced-condition}, i.e. $a_{n-1}=-a_1- \cdots -a_{n-2}$.
Writing $A_{n-1} = A_{n-2} + a_{n-1}\, \omega^{n-1\overline{n-1}}$
and
$B_{n-1} = B_{n-2} + b_{n-1}\, \omega^{n-1\overline{n-1}}$, and
noting that $(A_{n-2})^2 \wedge (B_{n-2})^{n-3}=0$, one has that the astheno-K\"ahler condition
is equivalent to
\begin{eqnarray*}
0 \!&\!=\!&\! (A_{n-1})^2 \wedge (B_{n-1})^{n-3}
=
(A_{n-2} + a_{n-1}\, \omega^{n-1\overline{n-1}})^2 \wedge (B_{n-2} + b_{n-1}\, \omega^{n-1\overline{n-1}})^{n-3}\\[5pt]
\!&\!=\!&\! \left[ (A_{n-2})^2 + 2\, a_{n-1}\, A_{n-2} \wedge \omega^{n-1\overline{n-1}} \right]
\wedge \left[ (B_{n-2})^{n-3} + (n-3)\, b_{n-1}\, (B_{n-2})^{n-4} \wedge \omega^{n-1\overline{n-1}} \right]\\[5pt]
\!&\!=\!&\! \left[ (n-3)\, b_{n-1}\, A_{n-2} - 2\, (a_1+ \cdots + a_{n-2})\, B_{n-2} \right]
\wedge A_{n-2} \wedge (B_{n-2})^{n-4} \wedge \omega^{n-1\overline{n-1}}.\nonumber
\end{eqnarray*}

Let us observe that in order to simplify this equation one can take
$a_1,\ldots,a_{n-2}>0$ and $b_j=a_j$ for $1 \leq j \leq n-2$.
Indeed, in this case we have that $B_{n-2}=A_{n-2}$, so it is enough to choose $b_{n-1}=\frac{2}{n-3}(a_1+ \cdots + a_{n-2})$
to get an astheno--K\"ahler metric $F$ given by~\eqref{diagonal-metric}.

Finally, by Corollary~\ref{AK is k-Gauduchon} the metric $F$ is in addition $k$-th Gauduchon for any $1 \leq k \leq n-1$.
We notice that
it can be directly proved that these complex nilmanifolds do not admit any SKT metric.
Let us also note that the canonical bundle is holomorphically trivial, since the $(n,0)$-form
$\Omega=\omega^{1 \cdots n}$ is closed.
\end{proof}

\begin{remark}\label{AndradaBD}
{\rm
The (real) nilmanifolds in \eqref{ecus} correspond to the Lie algebras $\frg=\mathfrak{h}_{2n+1} \times \mathbb{R}$,
where $\mathfrak{h}_{2n+1}$ is the $(2n+1)$-dimensional Heisenberg algebra. Andrada, Barberis and Dotti proved in \cite[Proposition 2.2]{ABD}
that every invariant complex structure $J$ on these nilmanifolds is \emph{abelian}, i.e. $[Jx,Jy]=[x,y]$ for any $x,y\in\frg$. Moreover,
there are exactly $\left[\frac{n}{2}\right]+1$ complex structures up to isomorphism. Let $J_0$ be the complex structure
defined by taking all the coefficients $a_j$ positive numbers, i.e. $a_{1},\ldots,a_{n-1}>0$ in \eqref{ecus}.
One can prove the following result: \emph{for any $J$ not isomorphic to $J_0$, the complex nilmanifold admits
a balanced metric and an astheno-K\"ahler metric which is $k$-th Gauduchon for any $k$}.
}
\end{remark}

\begin{remark}\label{ejemplos-J-no-abeliana}
{\rm
The complex structure in the 4-dimensional example given in \cite{FGV} as well as those given in \eqref{ecus} are all abelian.
Here we present a more general family of 4-dimensional complex nilmanifolds where the complex structure is not of that special type.
Let us consider the complex structure equations
\begin{equation}\label{ecus-bis}
d\omega^1=d\omega^2=d\omega^{3}=0,\quad
d\omega^{4}=A\,\omega^{12} +B\,\omega^{13} +C\,\omega^{23} +\omega^{1\bar{1}}+\omega^{2\bar{2}}-2\,\omega^{3\bar{3}},
\end{equation}
where we require the coefficients $A,B,C$ to belong to $\mathbb{Q}(i)$ in order to ensure the existence of a lattice,
so that equations \eqref{ecus-bis} define a complex nilmanifold.
Consider a metric $F_{\alpha,\beta,\gamma}$ of the form
$$
F_{\alpha,\beta,\gamma} = \frac{i}{2} (\alpha\,\omega^{1\bar{1}}+ \beta\,\omega^{2\bar{2}}+\gamma\,\omega^{3\bar{3}}+ \omega^{4\bar{4}}),
$$
with $\alpha,\beta,\gamma \in \mathbb{R}^+$. On the one hand,
it is easy to see that $\alpha=\beta=\gamma=1$ provides a balanced metric.
On the other hand, the astheno-K\"ahler condition is satisfied
if and only if $\gamma=\frac{\alpha\,(|C|^2+4)+\beta\,(|B|^2+4)}{2-|A|^2} >0$,
so it suffices to take any complex structure in \eqref{ecus-bis} with $|A| < \sqrt{2}$.
This provides a family of 4-dimensional complex nilmanifolds $X_{A,B,C}$ with balanced and
astheno-K\"ahler metrics which are $k$-th Gauduchon for any $k$. Notice that if $(A,B,C) \not=(0,0,0)$, then the Lie algebra underlying
$X_{A,B,C}$ is not isomorphic to $\mathfrak{h}_{7} \times \mathbb{R}$.
}
\end{remark}

\medskip

\section*{Acknowledgments}
\noindent This work has been partially supported by the projects MINECO (Spain) MTM2014-58616-P
and Gobierno de Arag\'on/Fondo Social Europeo, grupo consolidado E15-Geometr\'{\i}a.
Adela Latorre is also supported by a DGA predoctoral scholarship.
We would like to thank Anna Fino for useful comments on the subject.
We also thank the referee for comments and suggestions
that have helped us to improve the final version of the paper.

\vspace{-0.25cm}

\end{document}